\numberwithin{equation}{section}
\theoremstyle{plain}
\newtheorem{theorem}[equation]{Theorem}
\newtheorem{proposition}[equation]{Proposition}
\newtheorem{lemma}[equation]{Lemma}
\newtheorem{corollary}[equation]{Corollary}
\theoremstyle{remark}
\theoremstyle{definition}
\newtheorem{definition}[equation]{Definition}
\newtheorem{question}[equation]{Question}
\newcommand{\C}{\mathbb C}
\newcommand{\K}{{\mathcal K}}
\def\Pic{\text{Pic}}
\renewcommand{\P}{\mathbb P}
\newcommand{\R}{\mathbb R}
\newcommand{\Z}{\mathbb Z}
\newcommand{\mcal}[1]{\mathcal{#1}}
\DeclareMathOperator{\Bs}{Bs}
\begin{document}

\title{Big q-Ample Line Bundles}
\author{Morgan V Brown}
\email{mvbrown@math.berkeley.edu}

\begin{abstract}
A recent paper of Totaro develops a theory of $q$-ample bundles in characteristic $0$.  Specifically, a line bundle $L$ on $X$ is $q$-ample if for every coherent sheaf $\mathcal{F}$ on $X$, there exists an integer $m_0$ such that $m\geq m_0$ implies $H^i(X,\mathcal{F}\otimes \mathcal{O}(mL))=0$ for $i>q$.  We show that a line bundle $L$ on a complex projective scheme $X$ is $q$-ample if and only if the restriction of $L$ to its augmented base locus is $q$-ample.  In particular, when $X$ is a variety and $L$ is big but fails to be $q$-ample, then there exists a codimension $1$ subscheme $D$ of $X$ such that the restriction of $L$ to $D$ is not $q$-ample.
\end{abstract}
\maketitle
\section{Introduction}

\begin{comment}All schemes are projective and defined over the complex numbers $\C$.  In this case the group of Cartier divisors up to equivalence is isomorphic to the group of line bundles under tensor power, so we use these notions interchangeably. For a divisor $L$ we write $\mathcal{O}(L)$ for the sheaf associated to $L$. A divisor $L$ on an $n$-dimensional variety $X$ is called big if the image of the induced rational map $X\dashrightarrow \P(H^0(X,\mcal{O}(mL)))$ has dimension $n$ for some positive value of $m$.  A divisor $L$ on $X$ is said to be nef if the restriction of $L$ to every curve in $X$ has nonnegative degree.  The $\R$ vector space of numerical classes of line bundles of $X$ we refer to by $N^1(X)$.
\end{comment}

A recent paper of Totaro \cite{1007.3955} generalizes the notion of an ample line bundle, with the object of relating cohomological, numerical, and geometric properties of these line bundles.  
Let $q$ be a natural number.  Totaro calls a line bundle $L$ on $X$ $q$-ample if for every coherent sheaf $\mathcal{F}$ on $X$, there exists an integer $m_0$ such that $m\geq m_0$ implies $H^i(X,\mathcal{F}\otimes \mcal{O}(mL))=0$ for $i>q$.

Totaro \cite{1007.3955} has shown that in characteristic $0$, this notion of $q$-amplitude is equivalent to others previously studied by Demailly, Peternell, and Schneider in \cite{MR1360502}. As a result, the $q$-amplitude of a line bundle depends only on its numerical class, and the cone of such bundles is open.  This means that there is some hope of recovering geometric and numerical information about $X$ and its subvarieties from knowing when a line bundle is $q$-ample, though at present such results are known only in limited cases.  
In general much is known about the $0$-ample cone (which is the ample cone) and the $(n-1)$-ample cone of an $n$ dimensional variety $X$ is known to be the negative of the complement of the pseudoeffective cone of $X$.  For values of $q$ between $1$ and $n-2$ the relation between numerical and cohomological data remains mysterious.  The Kleiman criterion tells us that $0$-amplitude is determined by the restriction of $L$ to the irreducible curves on $X$, and likewise one gets at least some information about the $q$-ample cone by looking at restrictions to $(q+1)$-dimensional subvarieties.

However, Totaro \cite{1007.3955} has given an example of a smooth toric $3$-fold with a line bundle $L$ which is not in the closure of the $1$-ample cone, but the restriction of $L$ to every $2$-dimensional subvariety is in the closure of the $1$-ample cone of each subvariety.  For completeness, we include this example in section \ref{ex}.  The example shows that the most direct generalization of Kleiman's criterion does not hold for even the first open case:  the $1$-ample cone of a $3$-fold.  

The goal of this note is to show that one can in fact test $q$-amplitude on proper subschemes in the case where $L$ is a big line bundle on a projective variety $X$.  In particular, we show that if $L$ is a big line bundle which is not $q$-ample, and $D$ is the locus of vanishing of a negative twist of $L$, then the restriction of $L$ to $D$ is not $q$-ample either.  In a recent paper \cite{1012.1102}, K{\"u}ronya proves a sort of Fujita vanishing theorem for line bundles whose augmented base locus has dimension at most $q$.  As a consequence he shows that if the augmented base locus of $L$ has dimension $q$, then $L$ is $q$-ample.  We prove the following result:

\begin{theorem}\label{base}
Let $X$ be complex projective scheme, and let $L$ be a line bundle on $X$.  Let $Y$ be the scheme given by the augmented base locus of $L$ with the unique scheme structure as a reduced closed subscheme of $X$.  Then $L$ is $q$-ample on $X$ if and only if the restriction of $L$ to $Y$ is $q$-ample.
\end{theorem}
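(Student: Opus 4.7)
If $L$ is $q$-ample on $X$, then for any coherent sheaf $\mathcal{G}$ on $Y$ with closed immersion $\iota\colon Y\hookrightarrow X$, the pushforward $\iota_*\mathcal{G}$ is coherent on $X$, and the projection formula together with exactness of $\iota_*$ for closed immersions gives
\begin{equation*}
H^j\bigl(Y, \mathcal{G}\otimes\mathcal{O}(mL|_Y)\bigr) \;\cong\; H^j\bigl(X, \iota_*\mathcal{G}\otimes\mathcal{O}(mL)\bigr),
\end{equation*}
which vanishes for $j>q$ and $m\gg 0$. Hence $L|_Y$ is $q$-ample.

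\textbf{Converse, setup.} Assume $L|_Y$ is $q$-ample. If $L$ is not big then $Y=X$ and there is nothing to prove, so assume $L$ is big. Using the standard characterization of the augmented base locus, we may choose a sufficiently divisible $m>0$ and write $mL\sim E+A$ with $A$ ample on $X$ and $E$ an effective Cartier divisor whose support is exactly $Y$. Tensoring the short exact sequence $0\to\mathcal{O}_X(-E)\to\mathcal{O}_X\to\mathcal{O}_E\to 0$ with $\mathcal{F}\otimes\mathcal{O}(kL)$, for an arbitrary coherent sheaf $\mathcal{F}$ on $X$, yields
\begin{equation*}
0\to \mathcal{F}\otimes\mathcal{O}\bigl((k-m)L+A\bigr)\to \mathcal{F}\otimes\mathcal{O}(kL)\to \mathcal{F}|_E\otimes\mathcal{O}(kL|_E)\to 0.
\end{equation*}
Because $E$ and $Y$ share the same underlying topological space, every coherent $\mathcal{O}_E$-module admits a finite filtration by powers of the nilradical whose successive quotients are $\mathcal{O}_Y$-modules, so $q$-amplitude of $L|_Y$ propagates to $L|_E$. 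In particular, for $k$ large the $E$-cohomology vanishes in degrees $j>q$, producing a surjection
\begin{equation*}
H^j\bigl(X,\mathcal{F}\otimes\mathcal{O}((k-m)L+A)\bigr)\twoheadrightarrow H^j\bigl(X,\mathcal{F}\otimes\mathcal{O}(kL)\bigr),\quad j>q,\ k\gg 0.
\end{equation*}

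\textbf{Iteration and main obstacle.} Iterating the above surjection $\ell$ times produces a surjection from $H^j\bigl(X,\mathcal{F}\otimes\mathcal{O}((k-\ell m)L+\ell A)\bigr)$ onto $H^j\bigl(X,\mathcal{F}\otimes\mathcal{O}(kL)\bigr)$. The plan is to let $\ell$ grow with $k$ so that the residue $c:=k-\ell m$ stays in a fixed finite range; for each such $c$, Serre vanishing applied to the ample divisor $A$ and the fixed coherent sheaf $\mathcal{F}\otimes\mathcal{O}(cL)$ kills $H^j\bigl(X,\mathcal{F}\otimes\mathcal{O}(cL+\ell A)\bigr)$ for $j>0$ and $\ell\gg 0$, which gives the conclusion. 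The principal obstacle is uniformity: each iteration replaces $\mathcal{F}$ by $\mathcal{F}\otimes\mathcal{O}(A)$, and the vanishing threshold coming from $q$-amplitude of $L|_E$ a priori depends on the twisted sheaf, so the thresholds at successive steps could grow faster than linearly in $\ell$ and obstruct the iteration. To handle this I would invoke a Fujita-type vanishing statement for $q$-ample bundles --- in the spirit of K\"uronya's vanishing theorem referenced in the introduction and available from Totaro's equivalent characterizations of $q$-amplitude --- which yields a uniform threshold for the vanishing of $H^j\bigl(E,(\mathcal{F}\otimes\mathcal{O}(\ell A))|_E\otimes\mathcal{O}(kL|_E)\bigr)$ in degrees $j>q$, independent of $\ell$. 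Once such uniformity is in place, the iteration goes through and establishes that $L$ is $q$-ample on $X$.
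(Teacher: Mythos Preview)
Your forward implication is fine, but the converse has two genuine gaps. First, you write $mL\sim E+A$ with $E$ effective Cartier and $\operatorname{Supp}(E)=Y$, and this is in general impossible: an effective Cartier divisor has pure codimension one, whereas $\mathbf{B}_+(L)$ can have arbitrarily high codimension (take the pullback of an ample class under a small contraction, whose augmented base locus is the contracted curves). In general $\mathbf{B}_+(L)$ is only the \emph{intersection} of the supports of all such $E$, and no single $E$ realizes it. Your argument then needs $L|_E$ to be $q$-ample, but you only know this on the strictly smaller set $Y$, so the step fails. The paper circumvents this by arguing contrapositively: assuming $L$ is not $q$-ample on $X$, Theorem~\ref{restrict} shows $L$ remains non-$q$-ample on the zero scheme of \emph{any} section of $L'=aL-bH$, and one iterates --- at each stage choosing a section that misses some point outside $Y$ --- until noetherianness forces the process to terminate at $Y$.

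Second, the ``Fujita-type vanishing for $q$-ample bundles'' you invoke for uniformity --- vanishing of $H^j\bigl(E,\mathcal{F}|_E\otimes\mathcal{O}(kL+\ell A)|_E\bigr)$ for $j>q$ once $k$ is large, \emph{uniformly in $\ell\ge 0$} --- is not supplied by K\"uronya's theorem (which only gives vanishing in degrees above $\dim\mathbf{B}_+(L)$, a different hypothesis) nor does it fall out of Totaro's equivalent characterizations. It is precisely Lemma~\ref{mainlemma} of the paper, and that lemma requires its own proof by induction on dimension via hyperplane sections. So you have correctly located the central obstacle but not resolved it; the resolution is the paper's main technical input, not a citation. (Two smaller repairs are also needed: on a non-integral $X$ the chosen section need not be a non-zerodivisor, so your short exact sequence can acquire a kernel --- the paper handles this with a four-term sequence in the proof of Theorem~\ref{restrict} --- and tensoring the sequence by a non-locally-free $\mathcal{F}$ can introduce $\Tor$ terms, though this is avoidable by testing $q$-amplitude only against sheaves $\mathcal{O}(-dH)$ as Totaro's criterion permits.)
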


S. Matsumura has shown in \cite{1104.5313} that a line bundle admits a hermitian metric whose curvature form has all but $q$ eigenvalues positive at every point iff it admits such a metric when restricted to the augmented base locus.  A line bundle with such a metric is $q$-ample, but it is unknown in general whether every $q$-ample line bundle admits such a metric.

We also prove a Kleiman-type criterion for $(n-2)$-amplitude for big divisors when $X$ is smooth.

\begin{corollary}\label{criterion}
Let $X$ be a nonsingular projective variety.  A big line bundle $L$ on $X$ is $(n-2)$-ample iff the restriction of $-L$ to every irreducible codimension $1$ subvariety is not pseudoeffective.    
\end{corollary}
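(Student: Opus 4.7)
The plan is to combine Theorem \ref{base} with the description of the $(n-1)$-ample cone of a $d$-dimensional projective variety as the negative of the complement of its pseudoeffective cone, recalled in the introduction. For the forward direction, assume $L$ is $(n-2)$-ample and let $D \subset X$ be an irreducible codimension $1$ subvariety with inclusion $i \colon D \hookrightarrow X$. For any coherent sheaf $\mathcal{G}$ on $D$, the projection formula together with exactness of $i_*$ gives $H^j(D, \mathcal{G} \otimes L|_D^m) \cong H^j(X, i_*\mathcal{G} \otimes L^m)$; the right-hand side vanishes for $j > n-2$ and $m \gg 0$, so $L|_D$ is $(n-2)$-ample on the $(n-1)$-dimensional variety $D$, and the $(n-1)$-ample cone description on $D$ then forces $-L|_D$ not to be pseudoeffective.

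\textbf{Reverse direction.} Let $Y$ denote the augmented base locus $\mathbb{B}_+(L)$ with its reduced scheme structure. Since $L$ is big and $X$ is smooth, $Y \subsetneq X$, so $\dim Y \le n-1$. By Theorem \ref{base} it suffices to show $L|_Y$ is $(n-2)$-ample. If $\dim Y \le n-2$ this is automatic by Grothendieck vanishing, since all coherent cohomology vanishes above the dimension of the support. If instead $\dim Y = n-1$, decompose $Y = Y_1 \cup \cdots \cup Y_k$ into irreducible components; each top-dimensional $Y_i$ is an irreducible codimension $1$ subvariety of $X$, so the hypothesis together with the $(n-1)$-ample cone description forces $L|_{Y_i}$ to be $(n-2)$-ample, while on components of dimension $< n-1$ the conclusion is again automatic.

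\textbf{Gluing.} To combine these component-wise statements, I induct on $k$. Writing $Y = Y_1 \cup Y'$ with $Y' = Y_2 \cup \cdots \cup Y_k$, both reduced closed subschemes of $Y$, we have $\mathcal{I}_{Y_1} \cdot \mathcal{I}_{Y'} \subseteq \mathcal{I}_{Y_1} \cap \mathcal{I}_{Y'} = \mathcal{I}_Y = 0$, since $Y$ is reduced. Hence for any coherent sheaf $\mathcal{F}$ on $Y$, the subsheaf $\mathcal{I}_{Y'}\mathcal{F} \subseteq \mathcal{F}$ is annihilated by $\mathcal{I}_{Y_1}$ and therefore is the pushforward of a coherent sheaf on $Y_1$, while $\mathcal{F}/\mathcal{I}_{Y'}\mathcal{F}$ is coherent on $Y'$. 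The long exact sequence attached to $0 \to \mathcal{I}_{Y'}\mathcal{F} \otimes L^m \to \mathcal{F} \otimes L^m \to (\mathcal{F}/\mathcal{I}_{Y'}\mathcal{F}) \otimes L^m \to 0$, combined with the $(n-2)$-amplitude of $L|_{Y_1}$ and (by induction on $k$) of $L|_{Y'}$, yields $H^j(Y, \mathcal{F} \otimes L^m) = 0$ for $j > n-2$ and $m \gg 0$. The main technical obstacle is exactly this gluing; it relies crucially on the reducedness of $Y$ supplied by Theorem \ref{base}, which makes the two ideal sheaves multiply to zero.
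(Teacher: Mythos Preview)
Your argument is correct, and it reaches the conclusion by a somewhat different route than the paper. For the reverse direction the paper invokes Corollary~\ref{big} to produce a single effective Cartier divisor $D$ on which $L$ fails to be $(n-2)$-ample, and then works directly on $D$: it uses that $X$ is smooth so that $\mathcal{K}_D$ is a line bundle, injects $H^0(D,\mathcal{O}(J-mL))$ into $\bigoplus H^0(D_i,\mathcal{O}(J-mL)|_{D_i})$, kills these groups via the pseudoeffectivity hypothesis, and then applies Serre duality together with Totaro's finite test for $q$-amplitude. You instead go through Theorem~\ref{base}, reduce to the reduced augmented base locus $Y$, apply Theorem~\ref{n-1 cone} componentwise, and then glue $q$-amplitude across components by the short exact sequence argument. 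Your approach hides the duality inside the citation to Theorem~\ref{n-1 cone} and makes the role of smoothness less visible (indeed, your argument as written barely uses it), while the paper's approach makes explicit why nonsingularity enters, via the line-bundle dualizing sheaf on $D$. Both handle the passage from a reducible scheme to its irreducible components, but in opposite directions: the paper pushes $H^0$ from $D$ into its components and dualizes, whereas you pull $q$-amplitude from the components back to $Y$ by filtering an arbitrary coherent sheaf.
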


When $X$ is a $3$-fold, a big line bundle $L$ is $1$-ample iff its dual is not in the pseudoeffective cone when restricted to any surface contained in $X$.  Since a big line bundle on a $3$-fold is always $2$-ample, our results give a complete description of the intersection of the $q$-ample cones with the big cone of a $3$-fold in terms of restriction to subvarieties. 

In the final section we examine possible geometric criteria for an effective line bundle to be $q$-ample. In particular, on an $n$-dimensional Cohen Macaulay variety, any line bundle which admits a disconnected section must fail to be $(n-2)$-ample.  This fact in particular helps to explain some features of Totaro's example, and may lead to more general criteria for $q$-amplitude.

I would like to thank my advisor David Eisenbud as well as Alex K{\"u}ronya, Rob Lazarsfeld, and Burt Totaro for helpful discussions and comments. 
\section{The Restriction Theorem}

In this section we prove that a line bundle $L$ which fails to be $q$-ample is still not $q$-ample when restricted to any section of $L-H$, where $H$ is any ample line bundle.   
\begin{theorem}\label{restrict}
Let $X$ be a reduced projective scheme over $\C$.  Suppose $L$ is a line bundle on $X$ which is not $q$-ample on $X$, and let $L'$ be a line bundle with a nonzero section such that $\mcal{O}(\alpha L-\beta L')$ is ample for some positive integers $\alpha, \beta$.  Let $D$ be the subscheme of $X$ given by the vanishing of some nonzero section of $L'$.  Then $L|_{D}$ is not $q$-ample on $D$.
\end{theorem}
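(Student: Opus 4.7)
Plan. I would prove the contrapositive: assuming $L|_D$ is $q$-ample on $D$, show $L$ is $q$-ample on $X$. The starting point is the short exact sequence
$$0 \to \mathcal{O}_X(-L') \xrightarrow{\,s\,} \mathcal{O}_X \to \mathcal{O}_D \to 0,$$
coming from the given section $s \in H^0(X, L')$ (valid when $s$ is a non-zero-divisor, which holds when $s$ does not vanish identically on any irreducible component of the reduced scheme $X$; components where $s$ does vanish identically are themselves contained in $D$ and can be handled by a separate reduction).

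For any coherent sheaf $\mathcal{F}$ on $X$, tensoring with $\mathcal{F}(mL)$ and passing to the long exact sequence gives
$$\cdots \to H^i\bigl(X, \mathcal{F}(mL - L')\bigr) \to H^i\bigl(X, \mathcal{F}(mL)\bigr) \to H^i\bigl(D, \mathcal{F}|_D(mL)\bigr) \to \cdots.$$
Because $L|_D$ is $q$-ample, the right-hand term vanishes for all $i > q$ and all sufficiently large $m$, producing a surjection $H^i(X, \mathcal{F}(mL - L')) \twoheadrightarrow H^i(X, \mathcal{F}(mL))$. Iterating this step, with $\mathcal{F}$ replaced by $\mathcal{F}(-jL')$ at step $j$, yields
$$H^i\bigl(X, \mathcal{F}(mL - kL')\bigr) \twoheadrightarrow H^i\bigl(X, \mathcal{F}(mL)\bigr)$$
for $i > q$ and $m$ sufficiently large in terms of $k$.

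Now I would exploit the ampleness of $H := \alpha L - \beta L'$. Setting $k = \beta s$ rewrites the left-hand sheaf as $\mathcal{F}((m - s\alpha)L + sH)$, and choosing $m = s\alpha + t$ with $t \in \{0, 1, \ldots, \alpha - 1\}$ reduces it to $\mathcal{F}(tL + sH)$. For each of the $\alpha$ fixed coherent sheaves $\mathcal{F}(tL)$, Serre vanishing gives $H^i(X, \mathcal{F}(tL + sH)) = 0$ for $i > 0$ and $s$ sufficiently large. Combined with the iterated surjection, $H^i(X, \mathcal{F}(NL)) = 0$ for $i > q$ and $N = s\alpha + t$ with $s$ large, and letting $t$ range over residues modulo $\alpha$ covers every $N \geq N_0$, establishing $q$-ampleness of $L$ on $X$.

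The main obstacle is controlling the iteration thresholds: at step $j$ the surjection requires $m \geq m_j$, where $m_j$ depends on the coherent sheaf $\mathcal{F}(-jL')|_D$, and a priori these thresholds could grow too fast in $k = \beta s$ for the linear choice $m = s\alpha + t$ to remain above them. To bound the growth linearly in $s$ with slope at most $\alpha$, I would invoke Totaro's equivalence between $q$-ampleness and uniform $q$-ampleness on $D$ with respect to the ample class $H|_D$: there exist $\lambda, A$ with $H^i(D, \mathcal{G}(mL - jH)) = 0$ for $i > q$, $j \geq 0$, $m \geq \lambda j + A(\mathcal{G})$. Using $\beta L'|_D = \alpha L|_D - H|_D$ to rewrite $\mathcal{F}(-\beta t L')|_D$ as $\mathcal{F}|_D((-t\alpha)L + tH)$, the uniform bound translates into exactly the needed linear-in-$s$ control. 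If the universal slope $\lambda$ exceeds $\alpha$, one first replaces $H$ (and hence $\alpha, \beta$) by a sufficiently large multiple so the new slope drops below the new $\alpha$, and the argument goes through.
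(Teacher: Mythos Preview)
Your contrapositive strategy is essentially the paper's argument run in reverse: both hinge on the sequence $\mathcal{O}_X(-L')\to\mathcal{O}_X\to\mathcal{O}_D$ to link cohomology on $X$ and on $D$, and both need a uniform vanishing lemma to control the iterated thresholds. The paper works in the direct direction, reducing without loss of generality to the case where $L$ is $(q+1)$-ample but not $q$-ample, locating a specific nonvanishing $H^{q+1}$ for arbitrarily large parameters, and pushing it down to $D$; you instead iterate surjections until the source lands in Serre-vanishing range. These are the same idea.

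The one concrete problem in your write-up is that the uniformity result you invoke points the wrong way. You quote vanishing of $H^i(D,\mathcal{G}(mL-jH))$ for $m\ge\lambda j+A(\mathcal{G})$, but along your iteration the restricted sheaves are $\mathcal{F}(-uL')|_D\bigl((m-t\alpha)L+tH\bigr)$ with a \emph{positive} $H$-twist, so a bound for negative twists does not apply and the whole discussion of the slope $\lambda$ is aimed at the wrong target. What you actually need is the paper's key Lemma~\ref{mainlemma}: if $L|_D$ is $q$-ample and $H|_D$ ample, then for each coherent $\mathcal{G}$ on $D$ there exist $a_0,b_0$ with $H^i(D,\mathcal{G}(aL+bH))=0$ for $i>q$ whenever $a,b\ge0$ and either $a\ge a_0$ or $b\ge b_0$. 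With this in hand the threshold worry evaporates: writing $j=\beta t+u$ with $0\le u<\beta$ and $m=s\alpha+t'$, the pair $(a,b)=\bigl((s-t)\alpha+t',\,t\bigr)$ lies in the first quadrant for $0\le t\le s-1$, and once $s\ge b_0+\lceil a_0/\alpha\rceil$ one of the two thresholds is always met for each of the finitely many sheaves $\mathcal{F}(-uL')|_D$. No rescaling of $H$ is required.

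Two smaller points. Tensoring the short exact sequence with an arbitrary coherent $\mathcal{F}$ need not remain left-exact; this is harmless if you restrict to $\mathcal{F}=\mathcal{O}(-dH)$, which suffices by Totaro's criterion and keeps everything locally free. And your proposed reduction for components on which $s$ vanishes identically is plausible but not free---you need that $q$-ampleness can be tested componentwise. The paper sidesteps this by keeping the four-term sequence $0\to\mathcal{F}\to\mathcal{O}_X(-L')\to\mathcal{O}_X\to\mathcal{O}_D\to0$, using the reduction to $L$ already $(q+1)$-ample on $X$, and applying Lemma~\ref{mainlemma} on $X$ to kill the resulting $H^{q+2}$ of the kernel term.
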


Before proving Theorem \ref{restrict}, we will need a lemma:
\begin{lemma}\label{mainlemma}
Let $X$ be a projective scheme over $\C$.  Fix an ample line bundle $H$ on $X$.  Suppose $L$ is a $q$-ample line bundle on $X$ for some $q\geq 0$.  Then for every coherent sheaf $\mathcal{F}$ on $X$ there exist integers $a_0$ and $b_0$ such that given $a,b\geq 0$, $H^i(X,\mathcal{F}\otimes\mcal{O}(aL+bH))=0$ for $i>q$ whenever $a\geq a_0$ or $b\geq b_0$.
\end{lemma}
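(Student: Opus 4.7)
The idea is to handle the two sufficient conditions ``$a\ge a_0$'' and ``$b\ge b_0$'' by separate arguments, using the two ends of the $(a,b)$-region.

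\emph{Case 1 ($a\ge a_0$).} The main step is to produce a single $a_0$ so that $H^i(X,\mcal{F}(aL+bH))=0$ for all $i>q$, $a\ge a_0$, and $b\ge 0$; this is a Fujita-type vanishing for the $q$-ample line bundle $L$. Since the $q$-ample cone is open in $N^1(X)_\R$ by Totaro \cite{1007.3955}, there is a positive integer $N$ such that $NL-H$ is still $q$-ample. Applying naive $q$-ampleness of $NL-H$ to $\mcal{F}$ yields $H^i(X,\mcal{F}(mNL-mH))=0$ for $i>q$ and $m$ large. To propagate this negative-$H$ vanishing to the non-negative $H$-twists we actually need, I would use the short exact sequence $0\to\mcal{O}_X(-H)\to\mcal{O}_X\to\mcal{O}_D\to 0$, where $D$ is cut out by a section of (a suitable multiple of) $H$, and iterate the induced long exact sequences in cohomology after twisting by $\mcal{F}(aL+bH)$, taking $a$ large to kill the restriction terms.

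\emph{Case 2 ($b\ge b_0$).} Once $a_0$ is fixed from Case 1, the set $\{0,1,\dots,a_0-1\}$ is finite. For each such $a$, apply Serre vanishing to the coherent sheaf $\mcal{F}\otimes\mcal{O}(aL)$ with respect to the ample $H$ to get a threshold $b_0(a)$ above which $H^i(X,\mcal{F}(aL+bH))=0$ for all $i>0$, and in particular for $i>q$. Setting $b_0=\max\{b_0(a):0\le a<a_0\}$ then handles all $(a,b)$ with $b\ge b_0$ not already covered by Case 1.

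\textbf{Main obstacle.} Case 1 is the crux. Unlike the classical Fujita vanishing theorem (which is the case where $L$ is ample), the uniform vanishing needed here, with $a_0$ independent of the non-negative $H$-twist $b$, cannot be extracted from naive $q$-ampleness alone. It really requires the finer characterizations of $q$-amplitude due to Demailly--Peternell--Schneider \cite{MR1360502} and Totaro \cite{1007.3955}, such as the openness of the $q$-ample cone or the uniform $q$-T-amplitude condition; the work is in transporting their negative-$H$ vanishing statements to the positive-$H$ side while keeping the threshold in $a$ uniform in $b$.
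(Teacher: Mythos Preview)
Your Case~2 is correct and identical to the paper's argument. The gap is in Case~1.

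The detour through openness of the $q$-ample cone does not help. Knowing that $NL-H$ is $q$-ample only anchors the vanishing at points $(a,b)=(mN,-m)$, so your propagation along the restriction sequence must climb through \emph{negative} values of $b$ before ever reaching $b\ge 0$, which only enlarges the region you need to control on $D$. More seriously, at every step of the propagation you need
\[
H^i\bigl(D,\mcal{F}|_D\otimes\mcal{O}(aL+b'H)\bigr)=0\quad\text{for }i>q,
\]
uniformly in $b'$ once $a$ is large. That is precisely the statement of the lemma one dimension down; your phrase ``taking $a$ large to kill the restriction terms'' presupposes it. So the argument is circular unless you explicitly set up an induction on $\dim X$, which you have not done.

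The paper does exactly that. It first reduces to $\mcal{F}=\mcal{O}(-dH)$ via a resolution by direct sums of such sheaves (only finitely many terms matter). It then inducts on $\dim X$, taking $H$ very ample and $D$ a general hyperplane section. Naive $q$-ampleness of $L$ on $X$ already gives an $a_1$ with $H^i(X,\mcal{O}(aL-dH))=0$ for $a\ge a_1$: this is the anchor at $b=0$, so no negative $b$-values enter and openness is never invoked. The inductive hypothesis on $D$ supplies an $a_2$ with $H^i(D,\mcal{O}(aL+(b-d)H)|_D)=0$ for all $a\ge a_2$ and $b\ge 0$. With $a_0=\max(a_1,a_2)$ the long exact sequence for $D\subset X$ then inducts on $b$ upward from $0$. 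Your own $b_0$-argument finishes.

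If you really want to avoid induction on dimension, the route you allude to via Totaro's $q$-$T$-amplitude (a linear bound guaranteeing $H^i(X,\mcal{O}(mL-jH))=0$ for $i>q$), combined with the same reduction to $\mcal{O}(-dH)$, would give Case~1 directly; but that must be written out, and it is not what your Case~1 paragraph actually argues.
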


\begin{proof}
Every coherent sheaf has a possibly infinite resolution by bundles of the form $\bigoplus\mathcal{O}(-dH)$.  By \cite[Appendix B]{MR2095471}, it thus suffices to check for finitely many sheaves of the form $\mcal{O}(-dH)$.
The proof follows by induction on the dimension of $X$.  In the base case, dimension $0$, the lemma follows because for every coherent sheaf the groups $H^i$ vanish for $i>0$.

Since every ample line bundle has some multiple which is very ample it suffices to prove the lemma when $H$ is very ample.  It is also enough to find the constants $a_0$ and $b_0$ such that the cohomology vanishes for a fixed $i>q$.  Assume $H$ is very ample, and fix an $i>q$.  Now, suppose $X$ has dimension $n$ and the lemma is true for projective schemes of dimension $n-1$.

Because $L$ is $q$-ample, we know there exists $a_1$ such that $H^i(X,\mcal{O}(aL-dH))=0$ whenever $a\geq a_1$.  Let $D$ be a hyperplane section under the embedding given by $H$.  By the inductive hypothesis, there exists $a_2$ such that $H^i(D,\mcal{O}(aL+(b-d)H))=0$ whenever $a\geq a_2$ and $b\geq 0$.  By abuse of notation, we use $L$ to refer to both the line bundle on $X$ and its pullback to $D$.  The projection formula \cite[II, Ex 5.1]{MR0463157} along with the preservation of cohomology under push forward by a closed immersion shows that this will not change the cohomology. %Appropriate thms citation needed from Hartshorne% 
Thus we have an exact sequence in cohomology:
\[
\ldots \to H^i(X,\mcal{O}(aL+(b-d)H)) \to H^i(X,\mcal{O}(aL+(b+1-d)H))\to H^i(D,\mcal{O}(aL+(b+1-d)H)_{|D}) \to \ldots
\]
Set $a_0=\text{max}\{a_1,a_2\}$.  Then for $a\geq a_0$, we know that $H^i(D,\mcal{O}((aL+(b+1-d)H))_{|D})=0$ so by induction on $b$ we know that $H^i(X,\mcal{O}(aL+(b-d)H))$ vanishes for all $b>0$.  To find $b_0$, we know that for each $a<a_0$, there exists  $b'$ such that the cohomology vanishes for $b>b'$ since $H$ is ample.  Take $b_0$ as the maximum of all the $b'$.  \end{proof}

\begin{proof}[Proof of Theorem \ref{restrict}]

$L$ is $q$-ample iff $\alpha L$ is, so we may assume $\alpha =1$.  Likewise in Totaro \cite[Cor 7.2]{1007.3955} Totaro shows that $L$ is $q$-ample on a scheme $X$ iff its restriction to the reduced scheme is $q$-ample, so we may assume $\beta=1$.

We recall another result of Totaro \cite[Thm 7.1]{1007.3955}:  
Given a fixed ample line bundle $H$ there exists a global constant $C$ such that $L$ is $q$-ample iff there exists $N$ such that $H^i(X,\mcal{O}(NL-jH))=0$ for all $i>q$, $1\leq j\leq C$.  Let us assume $L$ is $(q+1)$-ample but not $q$-ample.  Since $L$ is not $q$-ample for all $N$ one of the above groups is nonzero.  Since $L$ is $(q+1)$-ample that group must have $i=q+1$ for large enough $N$.  Now, $H$ is ample so for sufficiently large $e$, $H^i(X, \mcal{O}((e-j)H))=0$ for $i>q$, $1\leq j\leq C$.    

Likewise, for all sufficiently large $e\geq 1$, we know that $H^{q+1}(X,\mcal{O}((e-j)H))=0$, and that for some $1\leq j\leq C$, $H^{q+1}(X, \mcal{O}(eL-jH))\neq 0$.  Since $\mcal{O}(L')=\mcal{O}(L-H)$ there exist $j$ and $k$ such that $1\leq j \leq C$, and $1\leq k\leq e$ such that $H^{q+1}(X, \mcal{O}((e-j)H+(k-1)L'))=0$ and $H^{q+1}(X, \mcal{O}((e-j)H+kL'))\neq 0$.  To simplify notation we set $l=e-j$.

Consider the exact sequence:
\[
0\to \mcal{F} \to \mcal{O}_X(-L')\to \mcal{O}_X\to \mcal{O}_D\to 0
\]
The section defining $D$ may be given by a section which is not regular when $X$ is reducible and so the sheaf $\mcal{F}$ may be nonzero.    
Now write $\mcal{G}=\text{coker}(\mcal{F} \to \mcal{O}_X(-L'))=\text{ker}(\mcal{O}_X\to \mcal{O}_D)$.
After twisting by $\mcal{O}(lH+kL')$ we have two resulting long exact sequences in cohomology.  The first is
\[
\ldots \to H^{q+1}(X,\mcal{O}(lH+(k-1)L')) \to H^{q+1}(X, \mcal{G}\otimes \mcal{O}(lH+kL'))\to H^{q+2}(X, \mcal{F}\otimes\mcal{O}(lH+kL')) \ldots
\] 
Since $k\leq l$ and $\mcal{O}(H+L')=\mcal{O}(L)$, for sufficiently large $e$, $H^{q+2}(X, \mcal{F}\otimes\mcal{O}(lH+kL'))=H^{q+2}(X, \mcal{F}\otimes\mcal{O}((l-k)H+kL))=0$, by Lemma \ref{mainlemma} .  Thus $H^{q+1}(X,\mcal{O}(lH+(k-1)L'))=0$ implies  $H^{q+1}(X, \mcal{G}\otimes \mcal{O}(lH+kL'))=0$.

The second long exact sequence is given by
\[
\ldots \to H^i(X, \mcal{G}\otimes\mcal{O}(lH+kL')) \to H^i(X,\mcal{O}(lH+kL')) \to H^i(D, \mcal{O}(lH+kL')_{|D})\to \ldots
\]
The group $H^{q+1}(X, \mcal{G}\otimes \mcal{O}(lH+kL'))=0$, and $ H^i(X,\mcal{O}(lH+kL'))\neq 0$, so we see that $H^i(D, \mcal{O}(lH+kL')_{|D})\neq 0$.  $\mcal{O}(lH+kD)=\mcal{O}((l-k)H+kL)$, which has the form $\mcal{O}(aL+(b-d)H)$, where $d=C$, $a,b\geq 0$, and $a+b\geq e$.  Since we could choose $e$ arbitrarily large, by Lemma \ref{mainlemma} $L$ is not $q$-ample when restricted to $D$.

\end{proof}

In the case where $X$ is irreducible, every nonzero section of a line bundle is regular, and we get the following corollary:

\begin{corollary}\label{big}
If $X$ is a complex projective variety (irreducible and reduced) and $L$ is a big line bundle which is not $q$-ample, there exists a codimension $1$ subscheme of $X$ on which $L$ is not $q$-ample.
\end{corollary}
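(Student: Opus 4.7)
The plan is to derive Corollary \ref{big} as a direct consequence of Theorem \ref{restrict} by invoking Kodaira's lemma to produce the auxiliary line bundle $L'$ required by the theorem.

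First I would recall Kodaira's lemma: since $L$ is big on the projective variety $X$, there exist a positive integer $\alpha$, an ample line bundle $H$, and an effective divisor $E$ on $X$ such that $\alpha L \sim H + E$ as line bundles. Setting $L' = \mcal{O}(E)$, which has a distinguished nonzero global section $s$ cutting out the effective divisor $E$, the line bundle $\mcal{O}(\alpha L - L')$ is isomorphic to $H$ and therefore ample. Thus the hypotheses of Theorem \ref{restrict} are satisfied with $\beta = 1$.

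Next I would verify that the subscheme $D = (s = 0) \subset X$ is of pure codimension $1$. Because $X$ is a variety (integral), the structure sheaf $\mcal{O}_X$ has no zero divisors on any affine open, so any nonzero section of a line bundle is a regular element locally. Hence $s$ cuts out an effective Cartier divisor, which is a codimension $1$ closed subscheme of $X$. Applying Theorem \ref{restrict} directly yields that $L|_D$ is not $q$-ample, which is the conclusion of the corollary.

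There is essentially no obstacle here, since the entire technical content is packaged into Theorem \ref{restrict}; the only subtlety is recognizing that the irreducibility of $X$ is exactly what guarantees both that Kodaira's lemma supplies an effective $L'$ with $\alpha L - L'$ ample and that the resulting vanishing locus $D$ has codimension one (and simultaneously makes the sheaf $\mcal{F}$ in the proof of Theorem \ref{restrict} vanish, though for the present corollary this is not needed since we merely invoke the theorem as a black box).
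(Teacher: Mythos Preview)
Your proposal is correct and follows essentially the same route as the paper: both invoke Theorem~\ref{restrict} after producing an effective $L'$ with $\alpha L - \beta L'$ ample, the paper via openness of the big cone and you via Kodaira's lemma, which are equivalent formulations. One cosmetic point worth making explicit: since $L$ is not $q$-ample it is in particular not ample, so the divisor $E$ in Kodaira's decomposition is nonzero and hence $D$ is a genuinely nonempty codimension-one subscheme.
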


\begin{proof}
The cone of big line bundles on a projective variety is open, so we may pick $L'$ also big, so some large multiple of $L'$ has a nonzero section whose vanishing is an effective Cartier divisor.
\end{proof}
One subtlety of the Kleiman criterion for ample divisors is that is possible to have a divisor class which is positive on every irreducible curve but is not ample.  One such example is due to Mumford and can be found in \cite[Example 1.5.2]{MR2095471}.
In particular this shows that in Corollary \ref{big} the hypothesis `big' cannot be replaced by `pseudoeffective'.

\section{Augmented Base Loci}%%In this section we give a proof of a weaker version of a recent result of K{\"u}ronya \cite{1012.1102}. 
Let $L$ be a Cartier divisor on a variety $X$.  Write $\Bs(|L|)$ for the base locus of the full linear series of $L$.  It is also helpful to have a notion of the base locus for large multiples of $L$, as well as for small perturbations by the inverse of an ample line bundle.
\begin{definition}\cite[Def 2.1.20]{MR2095471}
The stable base locus of $L$ is the algebraic set \[{\bf B}(L)=\bigcap_{m\geq 1} \Bs(|mL|).\] 
\end{definition}
There exists an integer $m_0$ such that ${\bf B}(L) = \Bs(|km_0L|)$ for $k>>0$ \cite[Prop 2.1.20]{MR2095471}.
\begin{definition}\cite[Def 10.3.2]{MR2095472}
The augmented base locus of $L$, denoted by $\mathbf{B}_{+}(L)$, is the closed algebraic set given by $\mathbf{B}(L-\epsilon \mathcal{H})$, for any ample $\mathcal{H}$, and sufficiently small $\epsilon>0$.
\end{definition}
It is a theorem of Nakamaye \cite{MR1802513} that the augmented base locus is well defined.  Note that stable and augmented base loci are defined as algebraic sets, not as schemes.

Geometric properties of $\mathbf{B}_{+}(L)$ reveal information about how much $L$ fails to be ample.  For example, $\mathbf{B}_{+}(L)$ is empty if and only if $L$ is ample.  More generally, K{\"u}ronya has proved in \cite{1012.1102} a Fujita-vanishing type result for the cohomology groups $H^i$ where $i>\text{dim} \mathbf{B}_{+}(L)$. 
\begin{theorem}\label{Kuronya}\cite[Thm C]{1012.1102}
Let $X$ be a projective scheme, $L$ a Cartier divisor, and $\mathcal{F}$ a coherent sheaf on $X$.
Then there exists $m_0$ such that $m\geq m_0$ implies $H^i(X,\mathcal{F}\otimes\mcal{O}(mL+D))=0$ for all $i>$dim $\mathbf{B}_{+}(L)$ and any nef divisor $D$.
\end{theorem}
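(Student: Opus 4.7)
The plan is to reduce the statement to classical Fujita vanishing by producing an effective Cartier divisor whose support is exactly $Z := \mathbf{B}_{+}(L)$, and then exploiting the resulting short exact sequence to trade a twist by $mL$ for a twist by an ample divisor plus the nef divisor $D$; the uniformity in $D$ then comes for free from Fujita vanishing.

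First I would fix an ample divisor $H$ on $X$ and use the definition of $\mathbf{B}_{+}(L)$ together with the remark following Definition 3.1 to choose positive integers $p,q,N$ and a section
\[
s\in H^{0}\bigl(X,\mathcal{O}(NpL-NqH)\bigr)
\]
whose zero scheme $A$ satisfies $|A|=Z$ set-theoretically. Thus for every $t\geq 1$ the scheme $tA$ is also supported on $Z$, and we have the short exact sequence
\[
0\to \mathcal{O}_{X}(-tA)\to \mathcal{O}_{X}\to \mathcal{O}_{tA}\to 0.
\]

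Next, tensor this sequence with $\mathcal{F}\otimes\mathcal{O}(mL+D)$. The result is the four-term exact sequence
\[
0\to \mathcal{K}_{t}\to \mathcal{F}\otimes\mathcal{O}\bigl((m-tNp)L+tNqH+D\bigr)\to \mathcal{F}\otimes\mathcal{O}(mL+D)\to \mathcal{Q}_{t}\to 0,
\]
where $\mathcal{K}_{t}=\mathrm{Tor}_{1}^{\mathcal{O}_{X}}(\mathcal{O}_{tA},\mathcal{F}\otimes\mathcal{O}(mL+D))$ and $\mathcal{Q}_{t}=\mathcal{O}_{tA}\otimes \mathcal{F}\otimes\mathcal{O}(mL+D)$ are both supported on $Z$. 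Splitting this into two short exact sequences via the image sheaf and invoking Grothendieck vanishing (which gives $H^{i}(X,\mathcal{K}_{t})=H^{i}(X,\mathcal{Q}_{t})=0$ for $i>d:=\dim Z$), a routine diagram chase shows that for every $i>d$ the natural map
\[
H^{i}\bigl(X,\mathcal{F}\otimes\mathcal{O}((m-tNp)L+tNqH+D)\bigr)\twoheadrightarrow H^{i}\bigl(X,\mathcal{F}\otimes\mathcal{O}(mL+D)\bigr)
\]
is surjective (in fact an isomorphism for $i>d+1$). Hence it suffices to kill the left-hand group.

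Now comes the key point. Given $m$, set $t=\lfloor m/(Np)\rfloor$, so that $j:=m-tNp$ lies in the finite set $\{0,1,\dots,Np-1\}$. Applying the classical Fujita vanishing theorem (valid on any projective scheme) to each of the finitely many coherent sheaves $\mathcal{F}\otimes\mathcal{O}(jL)$ with respect to the ample divisor $NqH$, we obtain a single integer $t_{0}$ such that for every $t\geq t_{0}$, every $j\in\{0,\dots,Np-1\}$, every nef divisor $D$, and every $i>0$,
\[
H^{i}\bigl(X,\mathcal{F}\otimes\mathcal{O}(jL+tNqH+D)\bigr)=0.
\]
Taking $m_{0}:=t_{0}\cdot Np$ completes the proof. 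The main obstacle is handling the uniformity over all nef $D$ simultaneously; this is precisely what Fujita vanishing is designed for, and the strategy above is engineered so that $D$ is absorbed into the nef part of an ample+nef twist at the final step. A secondary technical nuisance is the possible nontriviality of $\mathrm{Tor}_{1}$ when $\mathcal{F}$ has torsion along $A$, but this term is automatically supported on $Z$ and therefore cohomologically invisible in the degrees we care about.
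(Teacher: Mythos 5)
There is a genuine gap at the very first step, and the rest of the argument rests on it. You assume there is a \emph{single} section $s\in H^0(X,\mathcal{O}(NpL-NqH))$ whose zero scheme $A$ has support exactly $Z=\mathbf{B}_+(L)$. What the definition of the augmented base locus gives you is that $Z=\Bs(|N(pL-qH)|)$ for suitable $N$, i.e.\ $Z$ is the \emph{common} zero locus of \emph{all} sections; an individual section only gives $|A|\supseteq Z$. In fact no single section can work in general: on an irreducible variety the zero scheme of a nonzero section of a line bundle is an effective Cartier divisor, hence of pure codimension one, whereas $\mathbf{B}_+(L)$ frequently has codimension $\geq 2$ (e.g.\ take a small contraction $f\colon X\to Y$ of a curve $C$ in a threefold and $L=f^*A$ with $A$ ample: then $\mathbf{B}_+(L)=C$). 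Even when $Z$ happens to be a divisor, a general member of the linear series contains its moving part in addition to the fixed locus, so its support is typically strictly larger than $Z$. Once $|A|\supsetneq Z$, the sheaves $\mathcal{K}_t$ and $\mathcal{Q}_t$ are only supported on $|A|$, Grothendieck vanishing gives nothing above $\dim|A|$ (possibly $n-1$), and your surjectivity statement in degrees $i>\dim Z$ — which is the whole content of the theorem — is lost.

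The rest of the skeleton (splitting the four-term sequence, absorbing the nef divisor $D$ into Fujita vanishing for the finitely many residues $jL$, and taking $t=\lfloor m/(Np)\rfloor$) is sound and is indeed the right mechanism for the uniformity in $D$; but to cut out $Z$ you must use several sections $s_1,\dots,s_r$ of $N(pL-qH)$ whose common zero scheme is supported on $Z$, and then replace the single short exact sequence by a Koszul-type complex or an induction over the $s_i$ (restricting to the intermediate subschemes $V(s_1,\dots,s_j)$, which have dimension larger than $\dim Z$, and applying Fujita-type vanishing there as well). This is essentially how K\"uronya's actual argument proceeds; note that the paper you are working from does not prove this statement but quotes it from \cite{1012.1102}, so the comparison here is with that source rather than with an in-paper proof.
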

In particular, K{\"u}ronya's theorem implies that $L$ is $q$-ample, for all $q$ at least as big as the dimension of $\mathbf{B}_{+}(L)$.  We show that in fact $L$ is $q$-ample if and only if the restriction of $L$ to $\mathbf{B}_{+}(L)$ is $q$-ample:

\begin{proof}[Proof of Theorem \ref{base}]
Certainly if $L$ is $q$-ample on $X$ it must be $q$-ample on $Y$.  For the converse, we apply \ref{restrict} inductively.  Suppose $L$ is not $q$-ample.  We may assume all schemes are reduced by \cite[Cor 7.2]{1007.3955}. Choose an ample divisor $H$, and choose $a$ and $b$ such that $L'=aL- bH$ satisfies $\Bs(|L'|)=\mathbf{B}_{+}(L)$.  

Suppose there is a point $x\in X$ which is not contained in $Y$.  Then since $Y$ is the base locus 
of $L'$, there is a section of $L'$ which does not vanish at $x$, and let $X'$ be the vanishing of this section.  Then by \ref{restrict} $L$ is not $q$-ample on $X'$.  The process only terminates when $X'=Y$, and it must terminate because $X$ was a noetherian topological space.
\end{proof}

%%% Say something about the hypotheses of this theorem... Counterexample when S2 but no S3?  Mumford-Horrocks....
\section{Towards a Numerical Criterion for $q$-ample Line Bundles}
The cone of ample line bundles in $N^1(X)$ has a nice description in terms of the geometry of curves in $X$ due to a theorem of Kleiman.  (See for example \cite[1.4.23]{MR2095471}.)

\begin{theorem} \label{K} (Kleiman's criterion)
Let $\text{Nef}(X)$ be the cone of nef divisors.  $\text{Nef}(X)$ is a closed cone, and the cone of ample divisors is the interior of $\text{Nef}(X)$.
\end{theorem}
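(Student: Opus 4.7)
The plan is to prove the statement by combining the Nakai--Moishezon criterion with a perturbation argument. Closedness of $\mathrm{Nef}(X)$ is essentially formal: by definition $\mathrm{Nef}(X) = \bigcap_C \{\alpha \in N^1(X)_{\R} : \alpha \cdot C \geq 0\}$, an intersection of closed half-spaces indexed by irreducible curves $C \subseteq X$, hence closed. The containment of the ample cone in the interior of the nef cone is also straightforward: ample divisors are nef because a high multiple gives a projective embedding in which hyperplane sections have positive degree on every curve, and the ample cone is open in $N^1(X)_{\R}$ since small enough perturbations of an ample divisor preserve ampleness (verified via Nakai--Moishezon applied to the perturbation).

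The nontrivial direction is that every interior point $L$ of $\mathrm{Nef}(X)$ is ample. Fix an ample class $H$; since $L$ lies in the interior, $L - \epsilon H$ is still nef for some rational $\epsilon > 0$. By Nakai--Moishezon it suffices to prove $L^d \cdot V > 0$ for every $d$-dimensional subvariety $V \subseteq X$. I would induct on $d$. When $d = 1$, $L \cdot V = (L - \epsilon H) \cdot V + \epsilon H \cdot V \geq \epsilon (H \cdot V) > 0$. For $d \geq 2$, expand
\[
L^d \cdot V = \sum_{k=0}^{d} \binom{d}{k} \epsilon^k (L - \epsilon H)^{d-k} \cdot H^k \cdot V.
\]
The $k = d$ summand is $\epsilon^d H^d \cdot V$, strictly positive because $H|_V$ is ample. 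Each remaining summand has the form $(L - \epsilon H)^{d-k} \cdot H^k \cdot V$ with $k < d$, a product in which $L - \epsilon H$ is nef and $H$ is ample; these contributions are nonnegative once one knows that intersections of nef classes with $V$ are nonnegative.

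The main obstacle is the auxiliary fact that $M_1 \cdots M_d \cdot V \geq 0$ whenever $M_1, \ldots, M_d$ are nef, which must be established without appealing to Kleiman's criterion itself to avoid circularity. I would handle the single-class case $M^d \cdot V \geq 0$ via asymptotic Riemann--Roch, $\chi(V, mM|_V) = \frac{m^d}{d!}(M|_V)^d + O(m^{d-1})$, combined with Serre vanishing applied to an ample perturbation $M + \delta A$ (rational $\delta > 0$, $A$ ample) whose ampleness on $V$ is verified directly from Nakai--Moishezon; letting $\delta \to 0$ gives $(M|_V)^d \geq 0$. The mixed-class version follows by multilinear polarization after rescaling the $M_i$ to rational classes. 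Feeding this nonnegativity back into the binomial expansion yields the desired strict inequality $L^d \cdot V > 0$, so $L$ is ample by Nakai--Moishezon, completing the identification of the ample cone with the interior of $\mathrm{Nef}(X)$.
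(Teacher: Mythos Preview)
The paper does not prove Kleiman's criterion; it merely records the statement and points to Lazarsfeld's \emph{Positivity in Algebraic Geometry} via the citation \cite[1.4.23]{MR2095471}. There is therefore no in-paper proof to compare your proposal against, so I will comment only on the argument itself.

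Your overall strategy is the standard one, but the step ``the mixed-class version follows by multilinear polarization'' is a genuine error. The polarization identity expresses a symmetric multilinear product $M_1 \cdots M_d \cdot V$ as an \emph{alternating} combination of self-products $(\sum_{i\in S} M_i)^d \cdot V$; knowing that each self-product is nonnegative does not make the alternating sum nonnegative. Already for $d=2$ one has $M_1\cdot M_2 \cdot V = \tfrac12\bigl[(M_1+M_2)^2 - M_1^2 - M_2^2\bigr]\cdot V$, and the right-hand side has no sign a priori. The correct route to $M_1\cdots M_d\cdot V \ge 0$ for nef $M_i$ is an induction on the number of nef factors: replace one $M_i$ by an ample class, realize it as a very ample hyperplane section to drop the dimension of $V$, and then recover the nef case by continuity as that class approaches $M_i$; this is exactly how Lazarsfeld organizes it. A related circularity you skate over is the claim that $M+\delta A$ is ample on $V$ ``directly from Nakai--Moishezon'': verifying the Nakai inequalities on proper subvarieties $W\subsetneq V$ is precisely the lower-dimensional case of the statement you are proving. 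The whole package (nonnegativity of nef intersections and interior-of-$\mathrm{Nef}$-implies-ample) has to be set up as a single simultaneous induction on $\dim V$, and your write-up does not do this.
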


One would like similar criteria to test the $q$-amplitude of $L$.  A duality argument gives a criterion for the $(n-1)$-ample cone:

\begin{theorem}\label{n-1 cone}\cite[Thm 9.1]{1007.3955}
On a variety $X$, the $(n-1)$-ample cone is the negative of the complement of the pseudoeffective cone.
\end{theorem}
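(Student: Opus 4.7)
The plan is to use Serre duality to translate $(n-1)$-amplitude into the vanishing of certain $H^0$ groups, and then to read that vanishing directly from (non)pseudoeffectivity of $-L$. As a preliminary, I would observe that both sides of the stated equivalence are open conditions depending only on the numerical class of $L$: the $(n-1)$-ample cone is open by Totaro \cite{1007.3955}, while the complement of $-\overline{\mathrm{Eff}}(X)$ is open because the pseudoeffective cone is closed. This openness lets me perturb $L$ by a small multiple of an ample class in either direction without affecting either side.

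For the implication that $L$ being $(n-1)$-ample forces $-L\notin\overline{\mathrm{Eff}}(X)$, I would argue the contrapositive. Suppose $-L$ is pseudoeffective. For any ample $H$ and small $\epsilon>0$ the class $-L+\epsilon H$ is pseudoeffective $+$ ample, hence big, so by the perturbation above I may replace $L$ with $L-\epsilon H$ and assume $L$ is $(n-1)$-ample with $-L$ actually big. On a smooth $X$, Serre duality gives
\[
H^n\bigl(X,\omega_X\otimes\mcal{O}(mL)\bigr)^{*} \;\cong\; H^0\bigl(X,\mcal{O}(-mL)\bigr),
\]
and the right-hand side is nonzero for all $m\gg 0$ since $-L$ is big. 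Taking $\mcal{F}=\omega_X$ in the definition of $(n-1)$-amplitude then contradicts the hypothesis.

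For the reverse direction, assume $-L\notin\overline{\mathrm{Eff}}(X)$. I want $H^n(X,\mcal{F}\otimes\mcal{O}(mL))=0$ for every coherent $\mcal{F}$ and $m\gg 0$. By the resolution argument of \cite[Appendix B]{MR2095471} used in Lemma \ref{mainlemma}, it suffices to treat $\mcal{F}=\mcal{O}(-kH)$ for $H$ a fixed ample and $k$ in a finite set. Serre duality then reduces the problem to
\[
H^0\bigl(X,\mcal{O}(K_X+kH-mL)\bigr)=0 \qquad\text{for all }m\gg 0.
\]
Since $-L$ lies outside the closed cone $\overline{\mathrm{Eff}}(X)$, there is an open neighborhood $U$ of $-L$ in $N^1(X)_{\R}$ disjoint from that cone. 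For $m\gg 0$ the class $-L+\tfrac{1}{m}(K_X+kH)$ lies in $U$, so $-mL+K_X+kH$ is not pseudoeffective, in particular not the class of any effective divisor, and the required vanishing follows.

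The main obstacle is extending everything beyond the smooth case: on a general projective variety one must use Grothendieck--Serre duality with the dualizing complex $\omega_X^{\bul}$ in place of $\omega_X$, and verify that the resolution/reduction step still delivers vanishing of $H^n$ of twists $\mcal{O}(-kH)\otimes\mcal{O}(mL)$ in that derived-categorical setting. The underlying asymptotic and numerical inputs—bigness of $-L$ as a class and the openness of the non-pseudoeffective region—are unaffected by these technicalities, so the argument is essentially unchanged in spirit.
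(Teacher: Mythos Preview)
The paper does not actually prove Theorem~\ref{n-1 cone}; it is quoted from Totaro \cite[Thm~9.1]{1007.3955}, and the only indication of method is the phrase ``a duality argument'' preceding the statement. Your proposal supplies exactly such a duality argument, and it is correct in the smooth case: Serre duality converts $H^n$-vanishing of twists into $H^0$-vanishing of the negative twists, and (non)pseudoeffectivity controls the latter asymptotically. The perturbation step exploiting openness of both cones is the right way to handle the boundary case where $-L$ is pseudoeffective but not big.

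Two minor remarks. First, for the reverse implication you can bypass the resolution argument entirely by invoking Totaro's criterion \cite[Thm~7.1]{1007.3955} directly: it suffices to find a single $N$ with $H^n(X,\mcal{O}(NL-jH))=0$ for $1\le j\le C$, which by duality is $H^0(X,\mcal{O}(K_X+jH-NL))=0$, and your openness argument handles these finitely many $j$ at once. Second, your caveat about the singular case is well placed; the paper itself runs an analogous duality argument in the proof of Corollary~\ref{criterion}, where it sidesteps the issue by working on a Cartier divisor in a smooth ambient variety so that the dualizing sheaf is still a line bundle. For the general statement one indeed needs Grothendieck duality or a reduction to the normalization/resolution, as Totaro does.
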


The Kleiman criterion says that $L$ is in the closure of the ample cone iff $-L$ is not big on any curve.  Theorem \ref{n-1 cone} says that $L$ is in the closure of the $(n-1)$-ample cone iff $-L$ is not big on $X$, which is the only subvariety of $X$ having dimension $n$.
Thus in some sense, both criteria say that to test if a divisor is in the closure of the $q$-ample cone it suffices to show that its dual is not in the big cone of any subvarieties of dimension $q+1$.  While one would hope that such a criterion holds for all $q$, we will see in \ref{ex} an example of Totaro which shows this fails for even the case of $3$-folds.  However, if we also require the divisor to be big, we may combine Corollary \ref{big} with a modification of the duality argument to yield Corollary \ref{criterion}.

\begin{proof}[Proof of Corollary \ref{criterion}]
Certainly if $L$ is $(n-2)$-ample on $X$ it is $(n-2)$-ample on every subvariety.  For the other direction, using $\ref{big}$ if $L$ fails to be $(n-2)$-ample we have an effective Cartier divisor $D$ on which $L$ is not $(n-2)$-ample.  By \cite[Cor 7.2]{1007.3955} we may assume $D$ is reduced.  Since $X$ is nonsingular, $D$ is a still a Cartier divisor, and the dualizing sheaf $\mathcal{K}_D$ is a line bundle given by $\K_D=(\K_X\otimes\mcal{O}(D))|_{D}$.

Let $D_i$ be the components of $D$, and let $f:\coprod D_i \to D$ be the canonical map.  Then the map $\mathcal{O}_D\to f_{*}\bigoplus\mathcal{O}_{D_i}$ is injective, and so yields an injective map $H^0(D,J)\to \bigoplus H^0(D_i,J|_{D_i})$ for any line bundle $J$ on $D$.  Suppose $-L$ is not pseudoeffective on any of the $D_i$.  Then for any line bundle $J$ and sufficiently large $m$ depending on $J$, $ H^0(D_i,\mathcal{O}(J|_{D_i})=0$, so $H^0(D,\mathcal{O}(J-mL))=0$.

It follows by duality that $H^{n-1}(D, \mathcal{K}_D\otimes\mcal{O}(mL-J))=0$ for any line bundle $J$ and sufficiently large $m$.  But by \cite[Thm 7.1]{1007.3955} this means $L$ is $(n-2)$-ample on $D$, a contradiction.
\end{proof}
\section{Totaro's Example}\label{ex}
In this section we reproduce Totaro's example from \cite{1007.3955} of a line bundle $L$ on a smooth toric Fano $3$-fold $X$ such that $L$ is not in the closure of the $1$-ample cone of $X$, but $L$ is in the closure of the $1$-ample cone of every proper subvariety of $X$.  Our goal is investigate what sort of additional obstacles beyond the numerical criterion must be considered to say when an effective bundle is $q$-ample.  
\begin{definition}
A line bundle $L$ on $X$ is called $q$-nef if for every dimension $q+1$ subvariety $V\subset X$ the restriction of $-L$ to $V$ is not big.
\end{definition}

The $q$-nef cone is a closed cone in $N^1(X)$.  By Theorem \ref{n-1 cone}, a $q$-ample bundle must be $q$-nef.  Also, when $q=0$ or $q=n-1$, the $q$-ample cone is the interior of the $q$-nef cone.
Let $X$ be the projectivization of the rank $2$ vector bundle $\mathcal{O}\oplus\mathcal{O}(1,-1)$ on $\P^1\times \P^1$.  Then $X$ is a smooth toric Fano $3$-fold.  
One can show that the corresponding fan $\Sigma$ in $\Z^3\otimes \R$ has rays 
\[
f_1=(0,0,-1),f_2=(0,0,1), f_3=(1,0,1), f_4=(0,1,-1), f_5=(-1,0,0), f_6=(0,-1,0) 
\]
The two dimensional cones are given by
\[
(13),(14),(15),(16),(23),(24),(25),(26),(34),(36),(45),(46)
\]
The maximal cones are
\[
(134),(136),(145),(146),(234),(236),(245),(246)
\]
\begin{figure}
\includegraphics[width=0.35\textwidth]{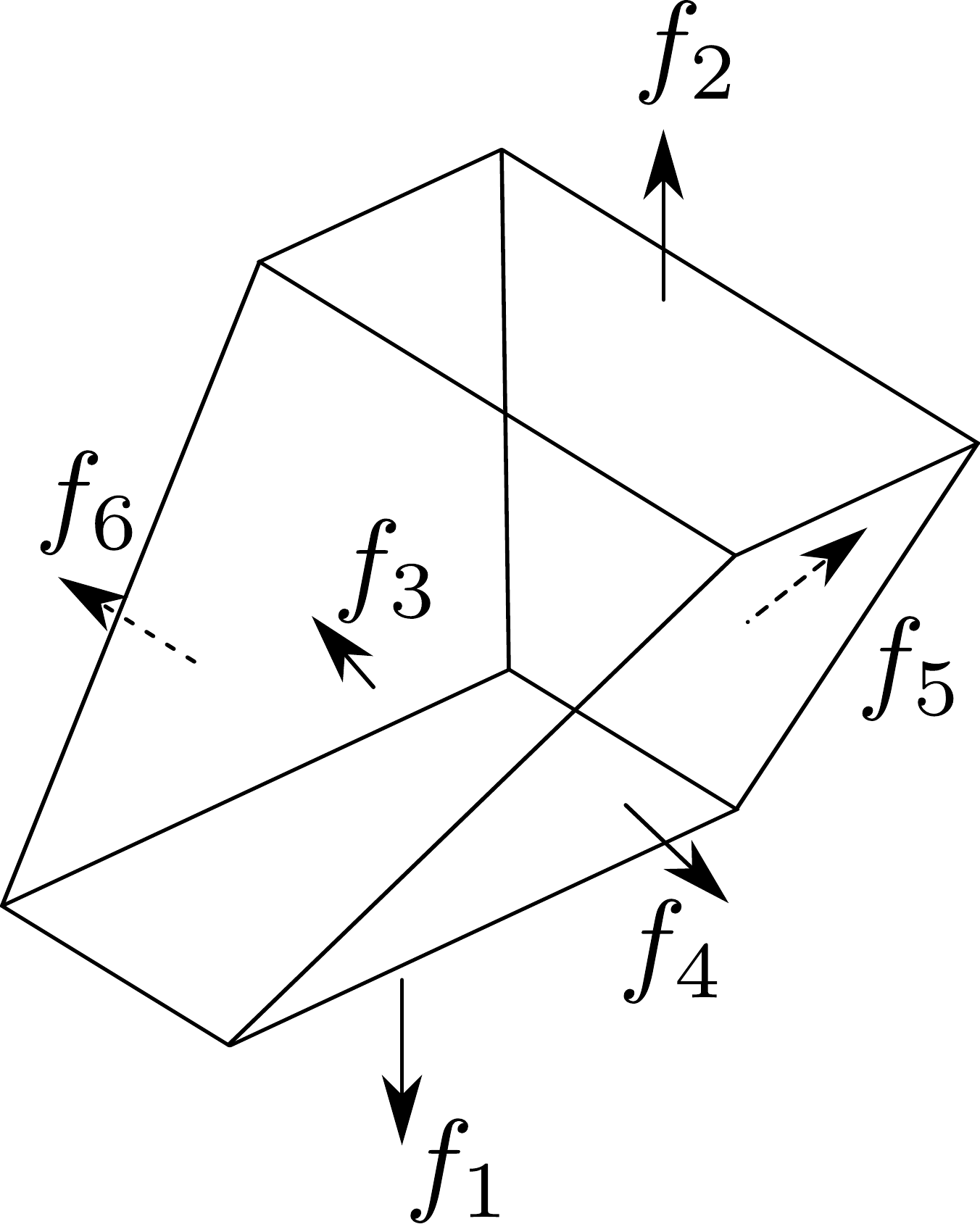}
\caption{The dual polytope to $\Sigma$}
\label{polytope}
\end{figure}Line bundles on $X$ are given by piecewise linear functions on $\Sigma$ which are integral linear functions on each cone.  Let $\langle\Sigma(1)\rangle$ be the $\R$ vector space spanned by the rays of $\Sigma$.  Since $X$ is simplicial we have an identification 
\[
\Pic\otimes\R \cong \langle\Sigma(1)\rangle^*/(\Z^3\otimes \R)^*
\]

Write $F_i$ for the function which sends $f_i$ to $1$ and $f_{j, j\neq i}$ to $0$.  Then we can identify $F_i$ with the divisor which is the closure of the torus orbit corresponding to the ray $f_i$.  Let $L=3F_1+3F_2-F_3-F_4-F_5-F_6$.  Then $L$ is not in the closure of the $1$-ample cone, but $L$ is $1$-nef.

To see that $L$ is not in the closure of the $1$-ample cone it suffices to show that a positive twist of $L$ is not $1$-ample.  For example, take $H=F_1+F_2+F_3+F_4+F_5+F_6$.
Then for any sufficiently small rational $\lambda>0$, a large integral multiple of $L+\lambda H$ has a nonvanishing $H^2$.  This follows from the formula for cohomology of line bundles given in \cite[p. 74]{MR1234037}, along with the fact that the rays with negative coefficients form a nontrivial $1$-cycle in $|\Sigma|\setminus \{0\}$.

The $1$-nef cone of a toric variety consists of divisors whose restriction to each torus invariant surface is not the negative of a big divisor.  It can be shown that $L$ is $1$-nef by restricting to each $F_i$.  As an example we explicitly work out the restriction of $L$ to $F_1$.  

The divisor $F_1$ is a toric variety and its fan is given by $\Sigma_{F_1}= \text{Star}(f_1)/\langle f_1\rangle$.  Denote the image of the ray $f_i$ in $\Sigma_{F_1}$ by $f_i'$.  This fan is isomorphic to the fan of $\P^1 \times \P^1$.  The most straightforward way of restricting $L$ to $F_1$ is to choose a linearly equivalent representative in $\langle\Sigma(1)\rangle^*$ which vanishes on $f_1$.  Take $L'=6F_2-4F_3+2F_4-F_5-F_6$.  Then the resulting piecewise linear function $\psi$ on $\Sigma_{F_1}$ has 
\[
\psi(f_3')=-4, \psi(f_4')=2, \psi(f_5')=-1, \psi(f_6')=-1
\]
This corresponds to the divisor $\mathcal{O}(1,-3)$ on $\P^1\times \P^1$, which is not the negative of a big divisor.  A similar calculation for the other $F_i$ shows that $L$ is actually $1$-nef.  

Figure \ref{effectivecone} shows a slice of $N^1(X)$, where the effective cone is shaded.  The numbers in each region are the largest $q$ such that a line bundle in the interior of that region is $q$-ample. 
\begin{figure}
\includegraphics[width=0.5\textwidth]{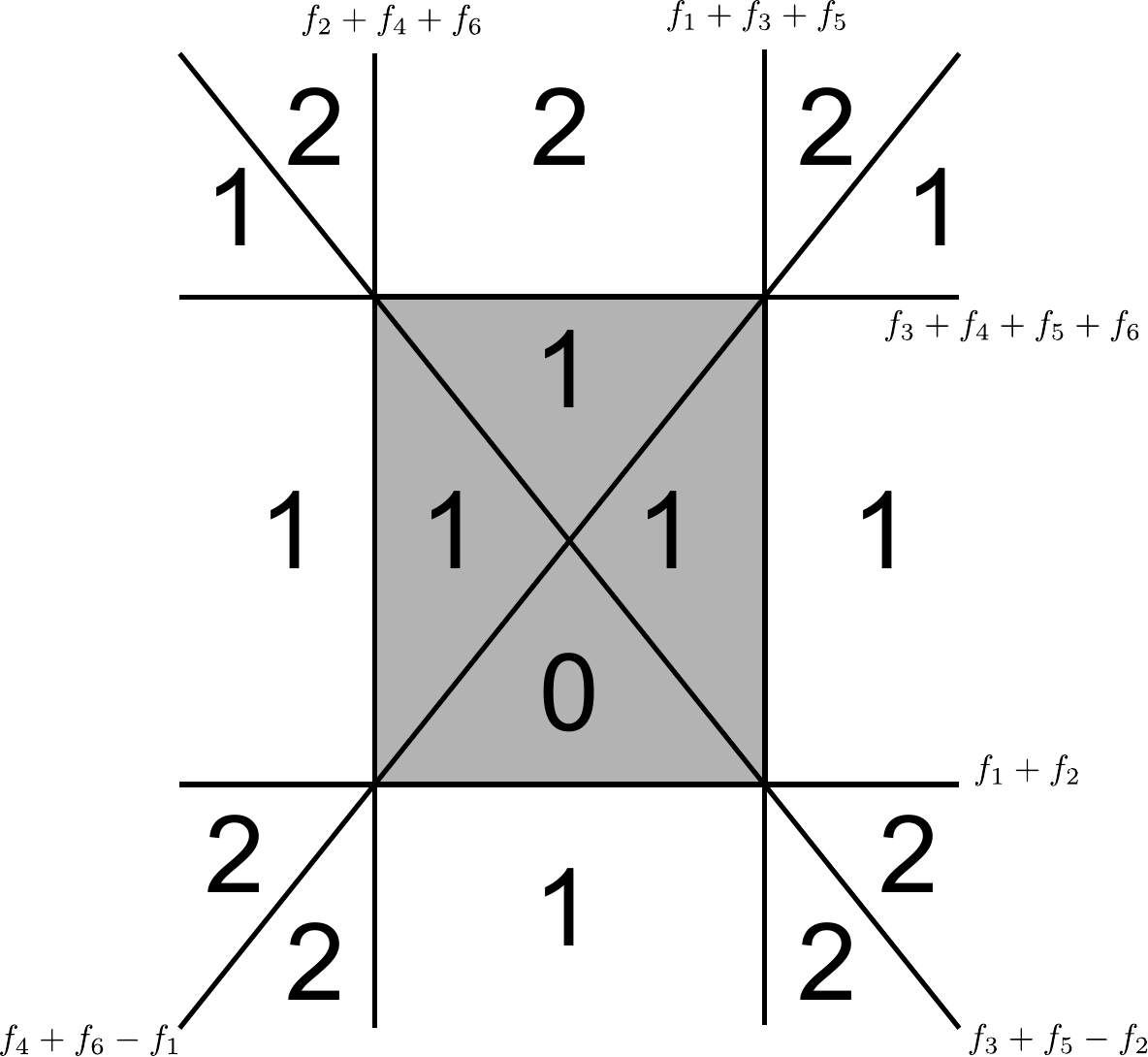}
\caption{Chambers in $N^1(X)$.  The effective cone is shaded, and each chamber is marked with the smallest $q$ such that a line bundle in the interior of the chamber is $q$-ample.  The planes are labelled by the corresponding linear dependence among rays in $\Sigma(1)$.}
\label{effectivecone}
\end{figure}

\section{Further Questions}
Let $X$ be a variety and $L$ a line bundle on $X$. When $L$ is not big, $\mathbf{B}_+(L)$ is all of $X$, and so yields no new information about whether $L$ is $q$-ample.  However, when $L$ is effective, we may hope to see other geometric consequences of $q$-amplitude reflected in the geometry of a section.
In the example in section \ref{ex}, the divisor $F_1+F_2$ is not $1$-ample, and this cannot be seen via any sort of restriction to proper subvarieties of $X$.  However, $F_1+F_2$ cannot be $1$-ample because it admits a section with disconnected zero set.
\begin{proposition}\label{disconnect}
Let $X$ be a normal irreducible Cohen-Macaulay variety of dimension $n$.  If $L$ is a line bundle on $X$ which admits a global section with disconnected zero set, then $L$ is not $(n-2)$-ample.
\end{proposition}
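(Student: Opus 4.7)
The plan is to exhibit a coherent sheaf $\mathcal{F}$ with $H^{n-1}(X,\mathcal{F}\otimes\mcal{O}(mL))\neq 0$ for every $m\geq 1$; since $n-1>n-2$, this immediately rules out $(n-2)$-amplitude. My witness will be $\mathcal{F}=\omega_X$, the dualizing sheaf, which exists as a coherent sheaf because $X$ is Cohen--Macaulay. By Serre duality on a projective Cohen--Macaulay scheme, the group $H^{n-1}(X,\omega_X\otimes\mcal{O}(mL))$ is dual to $H^1(X,\mcal{O}(-mL))$, so the problem reduces to producing a nonzero class in $H^1(X,\mcal{O}(-mL))$ for every $m\geq 1$.

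To produce such a class I will use the Koszul-type sequence obtained from the $m$-th power of the given section. Let $s\in H^0(X,L)$ be the section with disconnected zero scheme $D$, and let $k\geq 2$ be the number of connected components of $D$. Because $X$ is irreducible, $D$ is a proper nonempty subset of $X$, so $L$ is nontrivial and $s^m$ defines an injection $\mcal{O}(-mL)\hookrightarrow\mcal{O}_X$ whose cokernel is $\mcal{O}_{mD}$, the structure sheaf of the scheme-theoretic thickening $mD$ cut out by $s^m$. This yields
\[
0\to \mcal{O}(-mL)\to \mcal{O}_X\to \mcal{O}_{mD}\to 0.
\]
Since $|mD|=|D|$ topologically, $\mcal{O}_{mD}$ splits into a direct sum of sheaves over the $k$ components of $D$, giving $\dim H^0(X,\mcal{O}_{mD})\geq k$. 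Combined with $H^0(X,\mcal{O}_X)=\C$ (from normality and irreducibility) and $H^0(X,\mcal{O}(-mL))=0$ (any nonzero section, multiplied by $s^m$, would give a nonzero constant global function vanishing on the nonempty set $D$), the long exact sequence of cohomology forces $H^1(X,\mcal{O}(-mL))\neq 0$, as required.

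The argument should be quite short, and I do not foresee a serious obstacle. The two points that will require care are (a) invoking Serre duality in its correct form on a possibly singular Cohen--Macaulay scheme, where $\omega_X$ is only a coherent sheaf and in general not a line bundle, and (b) confirming that the scheme-theoretic thickenings $mD$ retain the $k\geq 2$ topological components of $D$, so that $\dim H^0(\mcal{O}_{mD})$ genuinely exceeds $\dim H^0(\mcal{O}_X)$. Both points are routine, and no additional input beyond the disconnectedness of $Z(s)$ is needed, so the conclusion holds in the stated generality.
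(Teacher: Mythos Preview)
Your argument is correct and follows essentially the same route as the paper: use the short exact sequence $0\to\mcal{O}(-mL)\to\mcal{O}_X\to\mcal{O}_{mD}\to 0$, compare $h^0(\mcal{O}_X)=1$ with $h^0(\mcal{O}_{mD})\geq 2$ to force $H^1(X,\mcal{O}(-mL))\neq 0$, and then Serre-dualize against $\omega_X$. Your version is slightly more careful about signs and about why $mD$ stays disconnected, but the strategy and the witness sheaf are identical.
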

\begin{proof}
Let $D$ be the vanishing of section of $L$, which is disconnected.  Then we can take the infinitesimal thickening $mD$ as the vanishing of a section of $\mathcal{O}(mL)$.  Consider the restriction exact sequence:
\[
0\rightarrow\mathcal{O}(-mL)\rightarrow\mathcal{O}_X\rightarrow\mathcal{O}_{mD}\rightarrow 0
\]
Since $X$ is connected $H^0(X,\mathcal{O}_X)$ is one dimensional, but $mD$ is not connected so $H^0(mD,\mathcal{O}_{mD})$ is at least two dimensional.  Thus the associated map $H^0(X,\mathcal{O}_X)\rightarrow H^0(mD,\mathcal{O}_{mD})$ is not surjective and so taking the associated long exact sequence we see that $H^1(X,\mathcal{O}(-mL))$ is nonzero. Let $\mathcal{K}_X$ be the dualizing sheaf on $X$.  By Serre duality, $H^{n-1}(X,\mathcal{K}_X\otimes\mathcal{O}(-mL))$ is nonvanishing for all $m$ so $L$ is not $(n-2)$-ample.
\end{proof}

\begin{question}
Given a smooth variety $X$ with an effective line bundle $L$ which is $(n-2)$-nef and such that there is a neighborhood $U$ in $N^1(X)$ that no line bundle in $U$ admits a section with disconnected vanishing set, must $L$ be $(n-2)$-ample?
\end{question} 
One possible way to interpret Proposition \ref{disconnect} is as a sort of Lefschetz hyperplane theorem for $(n-2)$-ample divisors.  Bott has proved the following generalization of the Lefschetz hyperplane theorem:
\begin{theorem} \cite[Thm III]{MR0215323}
Let $X$ be a smooth variety of dimension $n$, and $L$ a line bundle which admits a Hermitian metric whose curvature form has at least $n-q$ positive eigenvalues (counted with multiplicity) at every point.  Suppose also that $Y$ is the vanishing set of a section of $L$. Then $X$ is obtained from $Y$ as a topological space by attaching cells of dimension at least $n-q$.
\end{theorem}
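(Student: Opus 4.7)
My plan is to prove the theorem by Morse theory on $U = X \setminus Y$, following the Andreotti--Frankel / Andreotti--Grauert template. Let $s \in H^0(X, L)$ be a section with zero set $Y$, $h$ the given Hermitian metric with Chern curvature $\Theta_h$, and define
\[
\varphi := -\log \|s\|_h^2 \colon U \to \R.
\]
Then $\varphi$ is smooth on $U$, tends to $+\infty$ at $Y$, and is bounded below on $U$ by compactness of $X$. Because $\log|s_{\mathrm{loc}}|^2$ is pluriharmonic away from the zeros of the local representative of $s$, one computes $i \partial \bar\partial \varphi = \Theta_h$ on $U$, so the complex Hessian of $\varphi$ has at least $n-q$ positive eigenvalues at every point of $U$.

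After a compactly supported $C^\infty$-small perturbation --- which preserves the $(n-q)$-positivity (a $C^2$-open condition) and the homotopy type of the sublevel-set filtration --- I may assume $\varphi$ is Morse. The crucial linear-algebraic step is to show that at any critical point $p$ the real Morse index $m$ of $\varphi$ satisfies $m \leq n+q$. To see this, let $V \subset T_p X$ be a maximal subspace on which the real Hessian $Q$ is negative definite, and set $U_0 := V \cap J V$, a $J$-invariant, hence complex, subspace. Using the standard identity $H_{\C}(v, v) = \tfrac{1}{2}(Q(v) + Q(Jv))$ relating complex and real Hessians, $H_{\C}$ is strictly negative on every nonzero $v \in U_0$, forcing $\dim_{\C} U_0 \leq q$ by the positivity assumption on $\Theta_h$. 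Finally, $V + JV$ has real dimension $2m - \dim_{\R} U_0 \leq 2n$, which yields $m \leq n + q$.

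To extract the cell-attachment statement, I apply Morse theory to $-\varphi$: for $c$ sufficiently large, the closed set $V_c := Y \cup \{\varphi \geq c\}$ is a neighborhood of $Y$ that deformation retracts onto $Y$ (using the conical structure of neighborhoods of real-analytic sets, via the Lojasiewicz gradient inequality applied to $\|s\|_h^2$). As $c$ decreases past each critical value of $\varphi$ of index $\mu \leq n+q$, the set $V_c$ gains a handle whose core is a cell of real dimension $2n - \mu \geq n - q$. Once $c < \min_U \varphi$, we have $V_c = X$, so $X$ is obtained from $Y$ by attaching finitely many cells, each of real dimension at least $n - q$.

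The main technical obstacle is showing that $V_c$ deformation retracts onto $Y$ when $Y$ is singular or non-reduced: for smooth $Y$ this is the tubular neighborhood theorem, but in general one needs Lojasiewicz's results on semi-analytic neighborhoods together with the gradient flow of $\|s\|_h^2$. The linear-algebraic index bound is classical Andreotti--Grauert theory but sign-convention-sensitive, so care is needed to match ``at least $n-q$ positive eigenvalues'' with the correct Morse-index inequality on the target side.
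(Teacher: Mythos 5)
The paper does not prove this statement: it is quoted from Bott \cite{MR0215323} as background, so there is no internal proof to compare against. Your proposal is essentially Bott's own argument (the Andreotti--Frankel template): Morse theory for $\varphi=-\log\|s\|_h^2$ on $X\setminus Y$, the identity $i\partial\bar\partial\varphi=\Theta_h$ away from $Y$, the index bound $m\le n+q$ obtained from the complex subspace $V\cap JV$, and descending superlevel sets so that each critical point of index $\mu$ attaches a cell of dimension $2n-\mu\ge n-q$, with the retraction of a small neighborhood onto the possibly singular $Y$ handled by Lojasiewicz; this is correct in outline. The one step you assert rather than justify is that a \emph{compactly supported} perturbation suffices, i.e.\ that the critical set of $\varphi$ does not accumulate on $Y$; this follows from the Lojasiewicz gradient inequality applied to $\|s\|_h^2$, which gives $|\nabla\varphi|\ge c\,\|s\|_h^{2(\theta-1)}\to\infty$ near $Y$ with $\theta<1$, a tool you already invoke for the retraction, so the gap is minor and fixable within your own framework.
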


A line bundle is called $q$-positive if it admits such a Hermitian metric.  If $Y$ has `too much' homology in dimension $n-q-2$ it cannot be a section of a $q$-positive line bundle. It is a well known theorem of Andreotti and Grauert \cite{MR0150342} that a $q$-positive line bundle is $q$-ample.  The problem of determining when the converse holds was posed by \cite{MR1360502}, but little progress had been made until recently.  Ottem \cite{1105.2500} has given examples of line bundles which are $q$-ample but not $q$-positive when $\frac{1}{2}\text{dim}X-1<q<\text{dim}X-2$.  These examples are effective, and the analogue of the Lefschetz hyperplane theorem holds over $\mathbb{Q}$ but not $\mathbb{Z}$.  S. Matsumura  has shown in \cite{1104.5313} that if $X$ is a compact $n$ dimensional complex manifold with a K{\"a}hler form $\omega$, and $L$ is a line bundle such that the intersection $\omega^{n-1}\cdot L>0$, then $L$ is $1$-positive.
\bibliographystyle{amsalpha}      
\bibliography{bigqample}
\end{document}